\newtheorem{theorem}{Theorem}
\newtheorem{proposition}{Proposition}
\newtheorem{corollary}{Corollary}
\newcommand{\R}{{\mathbb R}}
\newcommand{\set}[2]{ \left\{ #1 \ \left| \ #2 \right. \right\} }
\title{On multilinear determinant functionals}
\author{Philip T. Gressman\footnote{Partially supported by NSF grant DMS-0850791.}}
\begin{document}
\maketitle

\begin{abstract}
This paper considers the problem of $L^p$-estimates for a certain multilinear functional involving integration against a kernel with the structure of a determinant.  Examples of such objects are ubiquitous in the study of Fourier restriction and geometric averaging operators.  It is shown that, under very general circumstances, the boundedness of such functionals is equivalent to a geometric inequality for measures which has recently appeared in work by D. Oberlin \cite{oberlin2000II} and Bak, Oberlin, and Seeger \cite{bos2008}.
\end{abstract}

\section{Introduction}
Let $\mu$ be a $\sigma$-finite, nonnegative Borel measure on some real, separable Hilbert space ${\cal H}$ (in practice, ${\cal H} = \R^d$ with the usual inner product).  For any positive integer $k$, let $\det(y_1,\ldots,y_{k+1})$ equal $k!$ times the Euclidean $k$-dimensional volume of the simplex with vertices $y_1,\ldots,y_{k+1}$ (this volume is simply the square-root of the determinant of the $k \times k$ matrix of Gram type whose $(i,j)$-entry is the inner product $\left< y_i-y_{k+1},y_j-y_{k+1} \right>_{\cal H}$).
The purpose of this paper is to characterize boundedness of the multilinear functional
\begin{equation}
T^{-\gamma}_{\mu,k}(f_1,\ldots,f_{k+1}) := \int \cdots \int \prod_{j=1}^{k+1} f_j(y_j) \frac{d \mu(y_{1}) \cdots d \mu(y_{k+1})}{(\det(y_1,\ldots,y_{k+1}))^{\gamma}} \label{mainobject}
\end{equation}
on $(k+1)$-fold products of Lebesgue spaces $L^{p}(\mu)$. 
To avoid ambiguities in the definition of \eqref{mainobject}, it will be assumed that the $\mu$-measure of any affine $(k-1)$-dimensional subspace of ${\cal H}$ is zero; any nonnegative, $\sigma$-finite Borel measure satisfying this condition will be called $k$-admissible.  Under this assumption, the zero set $\set{(y_1,\ldots,y_{k+1}) \in {\cal H}^{k+1}}{ \det(y_1,\ldots,y_{k+1}) = 0}$ has measure zero with respect to the $(k+1)$-fold product measure $\mu^{k+1}$ and \eqref{mainobject} is well-defined.

Functionals of the form \eqref{mainobject} have been widely studied in harmonic analysis for many years.  Such objects originally arose in the context of the restriction phenomenon for the Fourier transform: the Hausdorff-Young inequality may be applied to an appropriate power of the extension operator at the cost of a singular change of variables which gives rise to \eqref{mainobject}.  Arguments of this type go back to Fefferman \cite{fefferman1970} and Zygmund \cite{zygmund1974}; Carleson and Sj\"{o}lin \cite{cs1972} were the first to explicitly study objects like \eqref{mainobject}, followed by Prestini \cite{prestini1979}, Christ \cite{christ1985}, and Drury and Marshall \cite{dm1985}, \cite{dm1987}, and many others.  In all these cases, the measure $\mu$ was the arclength or affine arclength measure of some finite-type curve in $\R^d$.  For many years, the study of \eqref{mainobject} beyond measures on curves was somewhat limited; to major examples of work falling in this category are Christ's \cite{christ1984} sharp results for \eqref{mainobject} when $\mu$ is the Lebesgue measure, and Drury's \cite{drury1988} variant of \eqref{mainobject} when $\mu$ is the surface measure on the unit sphere in $\R^d$.  Recent years have seen the reappearance of objects like \eqref{mainobject}, in large part due to the work of Christ \cite{christ1998} demonstrating a connection between these objects and restricted weak-type estimates for geometric averaging operators.  As in the case of the restriction problem, the work in this area is extensive; some representative examples include the work of D. Oberlin \cite{oberlin2000II}, Tao and Wright \cite{tw2003}, Erdo\v{g}an and R. Oberlin \cite{eo2008}, and the author \cite{Gressman2007}.

It has recently been observed by D. Oberlin that there is a natural geometric condition on measures which appears to play an important role in both the regularity properties of averages over hypersurfaces (Radon-like operators) and in the Fourier restriction problem.  For example, in \cite{oberlin2000II}, Oberlin proved that the operation of convolution with a surface measure (with smooth density and compactly supported) in $\R^d$ maps $L^{\frac{d+1}{d}}(\R^d)$ to $L^{d+1}(\R^{d})$ if and only if the measure of any ambient parallelepiped in $\R^d$ is bounded above by a constant times the parallelepiped's volume raised to the power $\frac{d-1}{d+1}$.  Likewise, Bak, Oberlin, and Seeger \cite{bos2008} prove a similar result in the context of Fourier restriction to certain classes of degenerate curves.  As both of these problems are approached using multilinear determinant-type functionals, it is reasonable to suspect that the geometric condition of Oberlin generalizes to this setting (for measures including the curve and hypersurface cases).  In this paper it will be demonstrated that this suspicion is correct and that the connection between \eqref{mainobject} and the geometric measure criterion of Oberlin is, in fact, very natural and general.

For simplicity, let a subset $B \subset {\cal H}$ be called an ellipsoid when it may be written as
\[ B := \set{ x \in {\cal H}}{\sum_i \frac{|\left< x-x_0, \omega_i\right>|^2}{\ell_i^2}  \leq 1 } \]
for some $x_0 \in {\cal H}$, some orthonormal basis $\{ \omega_i \}$ of $\cal H$, and lengths $\ell_i \in [0,\infty]$.  If $x_0 = 0$, the ellipsoid will be called centered.  Given an ellipsoid $B \subset {\cal H}$ and a positive integer $k$ with $k \leq \dim {\cal H}$, we define the $k$-content of $B$, denoted $|B|_k$, to be equal to the supremum of $\ell_{i_1} \cdots \ell_{i_k}$ as the indices $i_1 < i_2 < \cdots < i_k$ range over all possible values (with the convention that the product is zero if any of the individual lengths is zero, even if additional lengths are infinite).  
Now suppose $\mu$ is a nonnegative Borel measure on ${\cal H}$.  This measure will be called $k$-curved with exponent $\alpha > 0$ when there exists a finite constant $C_\alpha$ for which \begin{equation}
\mu(B) \leq C_\alpha |B|_k^\alpha \label{curved}
\end{equation} for all ellipsoids $B$.  Oberlin's parallelepiped condition, for example, is equivalent to the assertion that the measure $\mu$ on $\R^k$ is $k$-curved with exponent $\alpha$ (in particular, it suffices to test $\mu$ on ellipsoids rather than all parallelepipeds).  If the inequality \eqref{curved} holds for centered ellipsoids, then $\mu$ will be called $k$-curved at the origin.  Note that any $\mu$ which is $k$-curved for some exponent $\alpha$ is automatically $k$-admissible, implying that \eqref{mainobject} is well-defined.

The condition \eqref{curved} may be regarded as measuring the extent to which $\mu$ is supported on some $k$-dimensional subspace of ${\cal H}$.  For example, the Lebesgue measure on any affine $d$-dimensional subspace of ${\cal H}$ will be $k$-curved if and only if $k \leq d$, in which case the exponent will equal $\frac{d}{k}$.  More generally, if $\mu$ is any measure of smooth density on a compact piece of a $d$-dimensional submanifold of ${\cal H}$, the measure $\mu$ will be $k$-curved for some $k > d$ if and only if no small neighborhoods of the submanifold lie in an affine $(k-1)$-dimensional hyperplane modulo an infinite order perturbation.  There are a variety of similar geometric considerations which have appeared in the literature which also imply the inequality \eqref{curved}; some of these will be taken up in section \ref{conditionsec}.

The main theorem of this paper is that, for a certain range of exponents $\gamma$, the $k$-curvature inequality for $\mu$ is equivalent to boundedness of \eqref{mainobject} on appropriate products of $L^p(\mu)$-spaces.  In particular:
\begin{theorem}
\label{equivtheorem} For any $\alpha > 0$ and $k$-admissible $\mu$, the following are equivalent: 
\begin{enumerate}
\item There is a constant $C_\alpha < \infty$ such that \eqref{curved} holds
for all ellipsoids $B \subset {\cal H}$.
\item For any $\gamma \in (0,\alpha)$ and any exponents $p_i \in [1,\infty]$, $i=1,\ldots,k+1$, satisfying $1- \frac{1}{p_i} < \frac{\gamma}{k \alpha}$ for all $i$ and $\sum_{i=1}^{k+1} (1-\frac{1}{p_i}) = \frac{\gamma}{\alpha}$, the inequality 
\begin{equation}
 \left| T^{-\gamma}_{\mu,k} (f_1,\ldots,f_{k+1}) \right|  \leq C \prod_{i=1}^{k+1} ||f_i||_{L^{p_i}(\mu)} \label{multi}
\end{equation}
holds with a finite constant independent of the functions $f_i$.
\item For some $\gamma \in (0,\infty)$ and exponents $p_i \in [1,\infty]$ satisfying $\sum_{i=1}^{k+1} (1-\frac{1}{p_i}) = \frac{\gamma}{\alpha}$, the inequality \eqref{multi} holds when each $f_i$ is a characteristic function.
\item For any $\gamma \in (0,\infty)$, and any exponents $p_i \in [0,1]$ satisfying $\frac{1}{p_i} - 1 < \frac{\gamma}{k \alpha}$ and $\sum_{i=1}^{k+1} (\frac{1}{p_i} - 1 ) = \frac{\gamma}{\alpha}$, the inequality
\begin{equation}
T^{\gamma}_{\mu,k} (|f_1|,\ldots,|f_{k+1}|) \geq c \prod_{i=1}^{k+1} ||f_i||_{L^{p_i}(\mu)} \label{multi2}
\end{equation}
holds with a positive constant $c$ independent of the functions $f_i$.
\item For some $\gamma \in (0,\infty)$ and exponents $p_i \in [0,1]$ satisfying $\sum_{i=1}^{k+1} ( \frac{1}{p_i} - 1 ) = \frac{\gamma}{\alpha}$, the inequality \eqref{multi2} holds when each $f_i$ is a characteristic function.
\end{enumerate}
\end{theorem}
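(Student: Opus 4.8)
The plan is to prove the five statements equivalent by closing two cycles through the geometric condition: $(1) \Rightarrow (2) \Rightarrow (3) \Rightarrow (1)$ for the upper (singular) estimates, and $(1) \Rightarrow (4) \Rightarrow (5) \Rightarrow (1)$ for the lower (non-degeneracy) estimates. Within each cycle the passage from the strong-type statement to its restricted-type companion, $(2) \Rightarrow (3)$ and $(4) \Rightarrow (5)$, is a trivial specialization: one selects a single admissible $\gamma$ together with exponents meeting the stated linear constraint (the centroid $1 - \frac{1}{p_i} = \frac{\gamma}{(k+1)\alpha}$ lies strictly inside the open polytope, so such a choice exists) and restricts to characteristic functions. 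Thus the entire content is carried by the two implications leaving (1): the easy testing arguments $(3) \Rightarrow (1)$, $(5) \Rightarrow (1)$, and the two genuinely analytic implications $(1) \Rightarrow (2)$ and $(1) \Rightarrow (4)$.

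For the testing directions I would feed $f_i = \chi_B$, the characteristic function of an arbitrary ellipsoid $B$, into \eqref{multi} and \eqref{multi2}. The one elementary geometric input is that $\det(y_1,\ldots,y_{k+1}) \lesssim |B|_k$ whenever all $y_j \in B$, since the $k$-dimensional volume spanned by vectors lying in an ellipsoid is controlled by the product of its $k$ largest semi-axes. In case (3) this lower-bounds $(\det)^{-\gamma}$ by a multiple of $|B|_k^{-\gamma}$ on $B^{k+1}$, so the left side of \eqref{multi} is $\gtrsim |B|_k^{-\gamma}\mu(B)^{k+1}$ while the right side is $\mu(B)^{\sum 1/p_i}$; since $k+1 - \sum_i \frac{1}{p_i} = \sum_i(1 - \frac{1}{p_i}) = \frac{\gamma}{\alpha}$, this collapses to $\mu(B)^{\gamma/\alpha} \lesssim |B|_k^{\gamma}$, i.e.\ exactly \eqref{curved}. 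Case (5) is identical with $(\det)^{\gamma} \lesssim |B|_k^{\gamma}$ now used as an upper bound on the kernel and the constraint $\sum_i(\frac{1}{p_i}-1) = \frac{\gamma}{\alpha}$ again producing \eqref{curved}.

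The heart of the theorem is $(1) \Rightarrow (2)$. First I would apply multilinear real interpolation to reduce the strong-type bound \eqref{multi}, valid on the open polytope cut out by $1 - \frac{1}{p_i} < \frac{\gamma}{k\alpha}$ and $\sum_i(1-\frac{1}{p_i}) = \frac{\gamma}{\alpha}$, to restricted-type bounds at the finitely many vertices of its closure. Each such vertex has $k$ of the exponents saturating $1 - \frac{1}{p_i} = \frac{\gamma}{k\alpha}$ and the remaining one equal to $p = 1$; the $L^1$ variable can then be frozen, reducing matters to a centered $k$-linear estimate amenable to induction on $k$. To run the induction I would exploit the recursion $\det(y_1,\ldots,y_{k+1}) = \det(y_2,\ldots,y_{k+1}) \cdot \mathrm{dist}(y_1, \mathrm{aff}(y_2,\ldots,y_{k+1}))$, peeling off one variable at a time and estimating the resulting single-variable integral $\int_E \mathrm{dist}(y,H)^{-\gamma}\, d\mu$ by a dyadic decomposition into slabs about the affine hyperplane $H$, each slab being a degenerate ellipsoid whose $\mu$-measure is governed by \eqref{curved}. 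The main obstacle is precisely this step: an unlocalized slab has infinite $k$-content, so the curvature inequality becomes useful only after the slab is intersected with the geometry imposed by the competing variables, and tracking this localization through the induction — so that the gains from \eqref{curved} at each dyadic scale assemble into exactly the relation $\sum_i(1-\frac{1}{p_i}) = \frac{\gamma}{\alpha}$ — is the delicate part.

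Finally, for $(1) \Rightarrow (4)$ I would establish the reverse inequality first for characteristic functions and then propagate it to general $f_i$ (now with $p_i \leq 1$) by a distributional layer-cake argument. The characteristic-function statement is a quantitative non-concentration estimate: given sets $E_i$, the curvature condition prevents $\mu$ from crowding into a thin ellipsoid, and I would show that a definite proportion of $\mu^{k+1}(E_1 \times \cdots \times E_{k+1})$ is carried by configurations on which $\det$ exceeds a scale $\lambda$ determined by the $\mu(E_i)$ through \eqref{curved}; inserting $\sum_i(\frac{1}{p_i}-1) = \frac{\gamma}{\alpha}$ then yields $\int (\det)^{\gamma} \gtrsim \prod_i \mu(E_i)^{1/p_i}$. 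I expect this to require the same slab/recursion lemma as the upper bound, merely applied in the reverse direction, so that once the restricted-type machinery behind $(1) \Rightarrow (2)$ is in place this last implication should follow with comparatively little additional work.
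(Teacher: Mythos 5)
There is a genuine gap, and it sits exactly where you flag ``the delicate part'' of $(1)\Rightarrow(2)$ and then move on without resolving it. Your easy implications are fine: the specializations $(2)\Rightarrow(3)$, $(4)\Rightarrow(5)$ and the testing arguments $(3)\Rightarrow(1)$, $(5)\Rightarrow(1)$ via $\det(y_1,\ldots,y_{k+1})\leq C_k|B|_k$ on $B^{k+1}$ match the paper (this is precisely the bound \eqref{fivetoone}), and the reduction of $(1)\Rightarrow(2)$ to a centered $k$-linear restricted estimate at the vertices of the exponent polytope is also the paper's route (Fubini in $y_{k+1}$ plus Christ's multilinear trick). But your plan for the centered $k$-linear estimate --- peel off one variable via $\det(y_1,\ldots,y_{k+1})=\det(y_2,\ldots,y_{k+1})\cdot\mathrm{dist}(y_1,\mathrm{aff}(y_2,\ldots,y_{k+1}))$ and control $\int_E\mathrm{dist}(y,H)^{-\gamma}\,d\mu$ by dyadic slabs --- cannot close as stated. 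A slab about an affine $(k-1)$-plane is an ellipsoid of infinite $k$-content, so \eqref{curved} gives no information about its $\mu$-measure; indeed the paper's final proposition shows that a slab bound $\mu(\{\mathrm{dist}(y,{\cal H}_0)\leq\delta\})\leq C\delta^{\alpha k}$ is a strictly \emph{stronger} hypothesis than \eqref{curved}, sufficient but not necessary, so the implication you need runs in the wrong direction. Without a localization mechanism the dyadic sum over slab scales has no decay and diverges at the critical exponents. You name this obstacle but supply no device to overcome it, and that device is the entire content of the paper's main technical result.

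What the paper actually does is prove a uniform sublevel-set estimate $I^{c_k\delta}_k(\mu,\ldots,\mu)\leq C_k\epsilon$ whenever $\delta$ is below the $k$-content of every centered ellipsoid of $\mu$-measure $\geq\epsilon$. The induction on $k$ is run not by fixing a hyperplane but by locating the outermost shell of $\mu$-mass exactly $\epsilon$ (at radius $r_0$), factoring $\det(0,x,y_1,\ldots,y_{k-1})=\|x\|\det(0,P_{\hat x}y_1,\ldots,P_{\hat x}y_{k-1})$ for $x$ in that shell, and applying the inductive hypothesis to the push-forward $\mu_{\hat x}$; the curvature hypothesis for $\mu_{\hat x}$ is recovered from that of $\mu$ precisely because the preimage cylinder $P_{\hat x}^{-1}(B_{\hat x})$, once intersected with the ball of radius $r_0$, sits inside a genuine ellipsoid $\tilde B$ of $k$-content at most $2^k r_0|B_{\hat x}|_{k-1}$. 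This ordering of variables by distance to the origin is the localization your slab argument is missing. The estimate is then completed by a bootstrapping recursion ${\cal C}_{k,\epsilon}\leq 2kC_{k-1}\epsilon^2+(1-\epsilon)^k{\cal C}_{k,\epsilon/(1-\epsilon)}$, stripping off mass $\epsilon$ at a time, and the restricted weak-type bound for $\widetilde T^{-\gamma}_{\mu,k}$ follows by summing over dyadic values of $\det$. Separately, your plan for $(1)\Rightarrow(4)$ as an independent non-concentration argument is unnecessary: the paper derives the lower bounds directly from the upper bounds via Cauchy--Schwarz, $\prod_j\mu(E_j)^2\leq\bigl(\int\det^\gamma\bigr)\bigl(\int\det^{-\gamma}\bigr)$, using only $k$-admissibility; you should adopt that shortcut rather than redo the hard analysis in reverse.
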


The proof of this theorem is contained in section \ref{proofsec}.  A key point to be established in the course of the proof is that the sublevel-type functional (where the exponent $\gamma$ is taken to be zero, but the integration in \eqref{mainobject} is restricted to the set where the determinant is below some threshold in magnitude) satisfies a certain restricted weak-type inequality.  This is accomplished by an inductive decomposition of the support of $\mu$ combined with a bootstrapping argument, similar to arguments of Bak, Oberlin, and Seeger \cite{bos2008} in the case of degenerate curves.  Section \ref{conditionsec} is devoted to some consequences of theorem \eqref{mainobject} which relate to various alternate geometric conditions appearing in the literature.  Before these topics are taken up, however, there are a few points regarding theorem \ref{equivtheorem} which are reasonable to address:
\begin{enumerate}
\item The range of $\gamma$ and exponents $p_i$ for which \eqref{multi} is shown to hold is the sharp range over which such inequalities can hold uniformly for all measures satisfying an Oberlin-type curvature condition like \eqref{curved}.  This fact is a consequence of Christ's sharp estimates for \eqref{mainobject} in the case where $\mu$ is the Lebesgue measure \cite{christ1984}.  On the other hand, it can and does happen that the sharp range for a single measure $\mu$ may be strictly greater than the range indicated by theorem \ref{equivtheorem}.
\item On the unit sphere $S^{n-1}$ with $k=n$, the inequality \eqref{rwt1} is the restricted weak-type version of the estimate of Drury \cite{drury1988}.  Passage to the full inequality can be accomplished by a Fubini argument coupled with the observation that the push-forward of the Lebesgue measure on $S^{n-1}$ to a hyperplane through the origin (via orthogonal projection) is itself $(n-1)$-curved with exponent $\alpha = 1$.
\item The passage from \eqref{mainobject} to condition \eqref{curved} afforded by theorem \ref{equivtheorem} is not only important because of the direct, geometric interpretation of the condition \eqref{curved}, but also because this condition may be formulated as a multilinear sublevel set problem on a product of one-dimensional spaces {\it a la} Phong, Stein, and Sturm \cite{pss2001}.  This technology is significantly more advanced than its higher dimensional analogues.   In particular, the success of the proof of theorem \ref{equivtheorem} relies on the ability to reduce the associated higher-dimensional sublevel set problem to a uniform family of one-dimensional sublevel set problems.
\end{enumerate}


\section{The proof of equivalence}
\label{proofsec}
The proof of theorem \ref{equivtheorem} will be accomplished by means of the multilinear trick of Christ \cite{christ1985}.  In particular, it will be shown that
\[ \left|T^{-\gamma}_{\mu,k}(\chi_{E_1},\ldots,\chi_{E_{k+1}}) \right| \leq C (|E_1| \cdots |E_{k+1}|)^{\frac{1}{p}} |E_{k+1}| \]
for an appropriate exponent $p$; the symmetry of the functional then allows for interpolation which will give \eqref{multi}.  This estimate will itself be established by means of a Fubini-type argument, in which $y_{k+1}$ is regarded as fixed and the resulting $k$-linear functional is considered.  To this end, let $\mu_1,\ldots,\mu_k$ be Borel probability measures on ${\cal H}$, and let $I_{k}^\delta(\mu_1,\ldots,\mu_k)$ be the $(\mu_1 \times \cdots \times \mu_k)$-measure of the set
\[ \set{(y_1,\ldots,y_k) \in {\cal H}^k}{0 < \det(0,y_1,\ldots,y_k) < \delta} \]
(note that by translation-invariance of \eqref{mainobject}, it suffices to fix $y_{k+1} = 0$; correspondingly it will suffice to discuss only centered ellipsoids).  The first and most important step towards the proof of theorem \ref{equivtheorem} is the following result, of certain interest in its own right, which relates the magnitude of $I^\delta_k(\mu,\ldots,\mu)$ to the measures of centered ellipsoids.  It holds even when $\mu$ is not assumed to be $k$-admissible (since $I_k^{\delta}$ is explicitly constructed to avoid the zero set of the $\det$ function):
\begin{theorem}
For any positive integer $k$, there are constants $c_k$ and $C_{k}$ such that for any $\epsilon > 0$ and any Borel probability measure $\mu$ on ${\cal H}$, 
\[ I^{c_k \delta}_{k}(\mu,\ldots,\mu) \leq C_{k} \epsilon \]
for any $\delta$ which is less than or equal to the infimum of the $k$-content of all centered ellipsoids $B$ satisfying $\mu(B) \geq \epsilon$. \label{mainst}
\end{theorem}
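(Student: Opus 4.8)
The plan is to induct on $k$. The base case $k=1$ is immediate: since $\det(0,y_1)=|y_1|$, the closed ball $B$ of radius $c_1\delta$ is a centered ellipsoid with $|B|_1=c_1\delta$, so for any $c_1<1$ the hypothesis forces $\mu(B)<\epsilon$; as $\{0<|y_1|<c_1\delta\}\subset B$ this gives $I_1^{c_1\delta}(\mu)<\epsilon$, i.e.\ the claim with $C_1=1$.

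For the inductive step I would factor the parallelepiped volume by Gram--Schmidt. Writing $W_{j-1}:=\vspan(y_1,\ldots,y_{j-1})$ and $h_j:=\mathrm{dist}(y_j,W_{j-1})$ (with $W_0=\{0\}$, so $h_1=|y_1|$), one has
\[ \det(0,y_1,\ldots,y_k)=\prod_{j=1}^k h_j . \]
Freezing $y_1,\ldots,y_{k-1}$ and setting $W:=W_{k-1}$ and $V:=\det(0,y_1,\ldots,y_{k-1})>0$, the bound for $I_k^{c_k\delta}$ reduces to integrating, against $d\mu^{k-1}$, the conditional ``last height'' measure $\mu\{y_k:0<\mathrm{dist}(y_k,W)<c_k\delta/V\}$. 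This is precisely the promised reduction of the $k$-dimensional sublevel problem to a uniform family of one-dimensional (slab) problems. Since $\mu^{k-1}$ is a probability measure, it would suffice to bound this conditional measure by $O(\epsilon)$ uniformly in the frozen variables.

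The curvature hypothesis enters through the anisotropic ellipsoid $B_r:=\{y:|P_W y|<r,\ |P_{W^\perp}y|<\rho\}$ with $\rho:=c_k\delta/V$, whose $k$ largest semi-axes (when $r\ge\rho$) are the $k-1$ copies of $r$ along $W$ together with one copy of $\rho$, so that $|B_r|_k=r^{k-1}\rho$. Setting this equal to $\delta$ gives the threshold $r_\ast=(\delta/\rho)^{1/(k-1)}=(V/c_k)^{1/(k-1)}$: for $r<r_\ast$ the content is $<\delta$, whence $\mu(B_r)<\epsilon$. Thus the ``bounded'' part $\{0<\mathrm{dist}(y_k,W)<\rho\}\cap\{|P_W y_k|<r_\ast\}$ already contributes $<\epsilon$, and choosing $c_k<1$ keeps the content strictly below $\delta$.

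The main obstacle is the complementary ``far-out'' part $F=\{\mathrm{dist}(y_k,W)<\rho,\ |P_W y_k|\ge r_\ast\}$, on which the centered-ellipsoid hypothesis gives no pointwise control: covering the dyadic shell $|P_W y_k|\approx 2^s r_\ast$ by needle-like ellipsoids of content $\delta$ requires about $2^{s(k-1)}$ of them, so the crude bound is $\gtrsim 2^{s(k-1)}\epsilon$ and diverges in $s$. I therefore expect one must forgo a pointwise bound and estimate the contribution of $F$ only after integrating in $y_1,\ldots,y_{k-1}$, exploiting the full permutation symmetry of $\det(0,\cdot)$: a far-out configuration is exactly one in which $y_k$ is long and nearly tangent to $\vspan(y_1,\ldots,y_{k-1})$, so after relabelling it corresponds to a configuration whose distinguished (last) vector is short and whose span has controlled $(k-1)$-content, i.e.\ to the already-estimated regime at a smaller scale. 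Organizing this as a stopping-time/greedy decomposition of $\supp\mu$ by scale and feeding the far-out term back through the symmetry should produce a self-improving inequality of the form $I_k^{c_k\delta}\le C\epsilon+\theta\,I_k^{c_k'\delta}$ with $\theta<1$, which then bootstraps to the desired bound and fixes the admissible $c_k,C_k$. Controlling this far-out/bootstrapping step — and keeping the geometric constants compatible across the induction — is where I expect essentially all of the difficulty to lie.
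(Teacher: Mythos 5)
Your base case is fine, and you have correctly located the crux: after freezing $y_1,\ldots,y_{k-1}$ the centered-ellipsoid hypothesis only controls the portion of the slab $\{0<\mathrm{dist}(y_k,W)<\rho\}$ lying within distance $r_\ast$ of the origin, and the far-out portion admits no uniform pointwise bound. But your proposal stops exactly there: the symmetrization/relabelling idea and the putative self-improving inequality $I_k^{c_k\delta}\le C\epsilon+\theta\,I_k^{c_k'\delta}$ are only gestured at, and it is not clear they can be made to work --- in particular, a far-out configuration after relabelling still involves the \emph{same} threshold $\delta$ and the same measure $\mu$, so there is no obvious source of the contraction factor $\theta<1$, and iterating in the $\delta$ parameter does not terminate. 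This is a genuine gap, not a routine detail.

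The paper closes it with a different decomposition, which you may find instructive to compare. Rather than conditioning on $y_1,\ldots,y_{k-1}$ and treating $y_k$ as a last Gram--Schmidt height, one orders points of $\supp\mu$ by distance to the origin: let $r_0$ be the largest radius with $\mu(\{\|x\|\ge r_0\})\ge\epsilon$ and peel off a piece $\mu_0$ of mass exactly $\epsilon$ supported where $\|x\|\ge r_0$. For $x$ in $\supp\mu_0$ one factors $\det(0,x,y_1,\ldots,y_{k-1})=\|x\|\det(0,P_{\hat x}y_1,\ldots,P_{\hat x}y_{k-1})$ and applies the inductive hypothesis to the push-forward $\mu_{\hat x}$; the key geometric lemma is that a centered ellipsoid $B_{\hat x}\subset{\cal H}_{\hat x}$ with $\mu_{\hat x}(B_{\hat x})\ge 2\epsilon$ pulls back, after intersecting with the ball of radius $r_0$, to a centered ellipsoid of $\mu$-measure $\ge\epsilon$ and $k$-content at most $2^k r_0|B_{\hat x}|_{k-1}$, which converts the hypothesis on $\mu$ into one on $\mu_{\hat x}$. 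This yields $I_k^{c_k\delta}(\mu_0,\mu,\ldots,\mu)\le 2C_{k-1}\epsilon^2$, and by multilinearity $I_k^{c_k\delta}(\mu,\ldots,\mu)\le 2kC_{k-1}\epsilon^2+I_k^{c_k\delta}(\mu_1,\ldots,\mu_1)$ with $\mu_1=\mu-\mu_0$ of mass $1-\epsilon$. The bootstrap is then in the \emph{mass of the measure}, not in $\delta$: renormalizing $\mu_1$ and iterating roughly $1/\epsilon$ times, each step contributing $O(\epsilon^2)$, sums to $O(\epsilon)$. Your ``far-out'' set is thus handled not by symmetry of the integrand but by always using the farthest-out mass as the pivot variable and removing it from the measure before recursing; this is the mechanism your sketch is missing.
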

\begin{proof}
The proof is by induction on $k$ and bootstrapping.  To begin, let ${\cal C}_{k,\epsilon}$ be the supremum of $I_{k}^{c_k \delta}(\mu,\ldots,\mu)$ as $\mu$ ranges over all Borel probability measures on ${\cal H}$ and all $\delta$ satisfying $\delta \leq |B|_k$ for all $B$ with $\mu(B) \geq \epsilon$ (and $c_k := 2^{-(k-1)(k+2)/2}$).  Clearly ${\cal C}_{k,\epsilon} \leq 1$ for any values of $k,\epsilon$.  It will be shown by induction that ${\cal C}_{k,\epsilon} \leq C_k \epsilon$ as well.  Note that it suffices to assume $\epsilon \leq 1$.

Consider the case $k=1$: $\det(0,x) = ||x||$, so $I^\delta_{1}(\mu)$ equals the $\mu$-measure of the set of points whose distance to the origin is strictly between $0$ and $\delta$.  In particular, if $B$ is any metric ball centered at the origin which satisfies $\mu(B) \geq \epsilon$, it is trivially true that $I^{|B|_1}_1(\mu) \leq \epsilon$.  Thus, if $c_1 = 1$, it follows that ${\cal C}_{1,\epsilon} \leq \epsilon$.

For general $k$, fix $\epsilon > 0$ and let $r_0$ be the supremum of all $r$ which satisfy $\mu(\set{x \in {\cal H}}{||x|| \geq r}) \geq \epsilon$.  By dominated convergence, it must be the case that this supremum is attained, i.e., $\mu(\set{x \in {\cal H}}{|x| \geq r_0}) \geq \epsilon$ while $\mu(\set{x \in {\cal H}}{||x|| > r_0}) \leq \epsilon$.  Let $\mu_0$ be the (unique) measure of mass exactly $\epsilon$ which is obtained by taking a convex linear combination of the restrictions of $\mu$ to the sets $\set{x \in {\cal H}}{||x|| \geq r_0}$ and $\set{x \in {\cal H}}{||x|| > r_0}$, respectively.  For any nonzero $x$ in the support of $\mu_0$,
\begin{equation}
\det(0,x,y_1,\ldots,y_{k-1}) = ||x|| \det(0, P_{\hat x} y_1,\ldots, P_{\hat x} y_{k-1}), 
\label{volumefactor}
\end{equation}
where $P_{\hat x}$ is the orthogonal projection onto the subspace orthogonal to $x$.  Let $\mu_{\hat x}$ be the push-forward of $\mu$ via $P_{\hat x}$.  By the induction hypothesis, it follows that
\[ I_{k-1}^{c_{k-1}\delta'} (\mu_{\hat x}, \ldots, \mu_{\hat x}) \leq 2 C_{k-1} \epsilon \]
provided that $\delta'$ is smaller than the $(k-1)$-content of any centered ellipsoid $B_{\hat x}$ in the Hilbert space ${\cal H}_{\hat x}$ (which is the subspace of ${\cal H}$ orthogonal to $x$) which satisfies $\mu_{\hat x} (B_{\hat x}) \geq 2 \epsilon$.  By Fubini's theorem, then,
\begin{equation} I^{c_{k-1} r_0 \delta'}_k (\mu_0, \mu, \ldots, \mu) \leq 2 C_{k-1} \epsilon^{2} \label{fubini1} \end{equation}
(where $\delta'$ may now be any quantity which is smaller than the $(k-1)$-content of $B_{\hat x}$ for any pair of $x$ and $B_{\hat x}$ with $\mu_{\hat x} (B_{\hat{x}}) \geq 2 \epsilon$ and $x \neq 0$) by virtue of \eqref{volumefactor} and the fact that $||x|| \geq r_0$ on the support of $\mu_0$.

Consider the inverse image $P_{\hat x}^{-1} (B_{\hat x})$; for convenience, call this set simply $B$.  This $B$ is itself a centered ellipsoid in ${\cal H}$ with infinite extent in the direction of $x$.
  The intersection of $B$ with the ball $\set{x}{||x|| \leq r_0}$ is contained in the centered ellipsoid given by $\tilde B := \set{ x \in {\cal}}{\sum_i \left( \frac{1}{2 \ell_i^2} + \frac{1}{2 r_0^2} \right) |\left<x, {\omega}_i\right>|^2 \leq 1}$, where $\ell_0 = \infty$ and $\omega_0 = \hat{x}$ (and the remaining $\ell_i$ are the extents of $B$ in all directions orthogonal to $\hat{x}$).  Since $\mu(B) \geq 2 \epsilon$, it follows that $\mu(B \cap \set{x}{|x| \leq r_0}) \geq \epsilon$; thus the $\mu$-measure of $\tilde B$ is at least $\epsilon$ as well.  
Now $( (2 \ell_i^2)^{-1} + (2 r_0^2)^{-1} )^{-1} \leq \min \{2 \ell_i^2, 2 r_0^2 \}$.  Consequently,  the $k$-content of $\tilde B$ is at most $2^k r_0$ times the $k-1$ content of $B_{\hat x}$.  Thus, if $\delta$ is taken to be the infimum of the $k$-contents of all centered ellipsoids $B$ with $\mu(B) \geq \epsilon$, then $\delta \leq 2^{-k} r_0 \delta'$ for some $\delta'$ satisfying \eqref{fubini1}; thus it follows that
\[ I^{c_{k} \delta}_{k} (\mu_0, \mu, \ldots, \mu) \leq 2 C_{k-1}  \epsilon^{2} \]
when $c_k = 2^{-k} c_{k-1}$.
By multilinearity, it must also be the case that
\begin{equation}
 I_{k}^{c_k \delta} (\mu,\mu,\ldots,\mu) \leq 2 k C_{k-1} \epsilon^{2} +  I_{k}^{c_k \delta} (\mu_1,\mu_1,\ldots,\mu_1)\label{decomp1}
\end{equation}
where $\mu_1 := \mu - \mu_0$ is a nonnegative Borel measure of mass $1 - \epsilon$.

Now $\frac{1}{1-\epsilon} \mu_1$ is also a Borel probability measure; moreover, any centered ellipsoid $B$ with measure at least $\frac{\epsilon}{1-\epsilon}$ with respect to this measure will have $\mu$ measure at least $\epsilon$.  Thus, taking a supremum of both sides of \eqref{decomp1} with respect to $\mu$ and $\theta$ will give
\[ {\cal C}_{k,\epsilon} \leq 2 k C_{k-1} \epsilon^2 + (1-\epsilon)^k {\cal C}_{k,\frac{\epsilon}{1-\epsilon}}. \]
Bootstrapping this inequality gives, for any positive integer $j$, 
\begin{align*}
 \frac{{\cal C}_{k,\epsilon} }{2k C_{k-1}}   \leq  & \epsilon^2 + (1-\epsilon)^k \left( \frac{\epsilon}{1-\epsilon} \right)^2 + (1-2\epsilon)^k \left( \frac{\epsilon}{1-2\epsilon} \right)^2 + \cdots \\
& + (1-(j-1) \epsilon)^k \left( \frac{\epsilon}{1- (j-1)\epsilon} \right)^2 + (1-j \epsilon)^k {\cal C}_{k,\frac{\epsilon}{1 - j \epsilon}}.
\end{align*}
Choose $j$ so that $0 \leq 1 - j \epsilon \leq \epsilon$.  Since ${\cal C}_{k,\epsilon'} \leq 1$ and $k \geq 2$, it follows that
\[\frac{{\cal C}_{k,\epsilon} }{2k C_{k-1}} \leq j \epsilon^2 + \epsilon^k \leq 2 \epsilon \]
(since we may assume $\epsilon \leq 1$).  Thus, the induction hypothesis holds for $C_k = 4 k C_{k-1}$, giving $C_k = 4^{k-1} k!$.
\end{proof}

It is perhaps worth noting that the proof provided could be used to establish better decay rates (as $\epsilon \rightarrow 0$) for $I^{c_k \delta}_k$ for restricted classes of measures (all that is necessary is that the class is closed under the cutoff/renormalization procedure).  {\it A priori} the bootstrapping argument would continue to work for any decay rate $\epsilon^\sigma$ for $\sigma < k$.  For the class of all Borel probability measures, though, theorem \ref{mainst} is sharp (that is, no better decay rate can hold).



The next step is to observe that the same sort of statement can be made about the fully multilinear functional $I^{\delta}_k(\mu_1,\ldots,\mu_k)$ (which is crucial since \eqref{multi} must, in particular, hold for $L^1$-normalized characteristic functions):
\begin{corollary}
For any positive integer $k$, there exist $c_k$ and $C_k$ such that, for any probability measures $\mu_1,\ldots,\mu_k$ on ${\cal H}$ and any $\epsilon > 0$, 
\[I^{c_k \delta}_{k}(\mu_1,\ldots,\mu_k) \leq C_k \epsilon \]
provided $\delta \leq |B_1|_k^{1/k} \cdots |B_k|_k^{1/k}$ for all centered ellipsoids $B_1,\ldots,B_k$ satisfying $\mu_i(B_i) \geq \epsilon$.  \label{maincor}
\end{corollary}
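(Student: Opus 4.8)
The plan is to deduce this multilinear statement from Theorem~\ref{mainst} by combining an averaging step with a measure-by-measure rescaling. First I would put the hypothesis into a usable form: for each $i$ set $\delta^*_i := \inf\{|B|_k : B \text{ a centered ellipsoid with } \mu_i(B) \geq \epsilon\}$. Since the constraints on the $B_i$ factor over $i$, the hypothesis $\delta \leq |B_1|_k^{1/k}\cdots|B_k|_k^{1/k}$ for all admissible tuples is equivalent to the single inequality $\delta \leq \delta^*_0$, where $\delta^*_0 := (\delta^*_1\cdots\delta^*_k)^{1/k}$ is the geometric mean of the individual thresholds. (The degenerate cases $\epsilon > 1$ and $\delta^*_i = 0$ are respectively trivial and vacuous, so one may assume $0 < \delta^*_i < \infty$ for every $i$.)

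The natural first attempt is to apply Theorem~\ref{mainst} to the averaged probability measure $\mu := \frac1k(\mu_1+\cdots+\mu_k)$. Since $\mu_i \leq k\mu$ for each $i$, the product measures satisfy $\mu_1\times\cdots\times\mu_k \leq k^k(\mu\times\cdots\times\mu)$, and hence $I^\delta_k(\mu_1,\ldots,\mu_k)\leq k^k I^\delta_k(\mu,\ldots,\mu)$; a bound for the diagonal functional thus transfers to the multilinear one at the cost of a factor $k^k$. The difficulty is that the threshold controlling the diagonal functional is too small: if $\mu(B)\geq\epsilon$ then $\max_i\mu_i(B)\geq\epsilon$, so $|B|_k\geq\min_i\delta^*_i$, and Theorem~\ref{mainst} applies only with $\inf\{|B|_k : \mu(B)\geq\epsilon\}\geq\min_i\delta^*_i$. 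As the minimum can be far smaller than the geometric mean $\delta^*_0$ furnished by the hypothesis, this estimate loses too much. This mismatch between the minimum and the geometric mean is the one real obstacle.

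The key step that removes it is to precompose with a scalar dilation adapted to each measure before averaging. For $\lambda_i>0$ let $\tilde\mu_i$ be the push-forward of $\mu_i$ under $x\mapsto\lambda_ix$. A uniform dilation multiplies $k$-content by $\lambda_i^k$, so the threshold of $\tilde\mu_i$ is $\lambda_i^k\delta^*_i$; choosing $\lambda_i:=(\delta^*_0/\delta^*_i)^{1/k}$ makes all rescaled thresholds equal to the common value $\delta^*_0$, for which minimum and geometric mean coincide. Crucially this choice satisfies $\lambda_1\cdots\lambda_k=1$, and since $\det(0,\lambda_1 x_1,\ldots,\lambda_k x_k)=(\lambda_1\cdots\lambda_k)\det(0,x_1,\ldots,x_k)$ by the degree-one homogeneity of the determinant in each argument, the functional is unchanged: $I^{t}_k(\tilde\mu_1,\ldots,\tilde\mu_k)=I^{t}_k(\mu_1,\ldots,\mu_k)$ for every $t$. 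I would then run the averaging argument on the rescaled measures: with $\tilde\mu:=\frac1k\sum_i\tilde\mu_i$, any centered ellipsoid $B$ with $\tilde\mu(B)\geq\epsilon$ satisfies $|B|_k\geq\min_i(\lambda_i^k\delta^*_i)=\delta^*_0$, so Theorem~\ref{mainst} yields $I^{c_k\delta^*_0}_k(\tilde\mu,\ldots,\tilde\mu)\leq C_k\epsilon$. Chaining monotonicity in the threshold ($\delta\leq\delta^*_0$), dilation-invariance, and the averaging bound gives $I^{c_k\delta}_k(\mu_1,\ldots,\mu_k)\leq k^k C_k\epsilon$, which is the claim with constant $k^kC_k$ in place of $C_k$. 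Once the thresholds are equalized, the reduction to Theorem~\ref{mainst} is immediate.
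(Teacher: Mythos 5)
Your proposal is correct and follows essentially the same route as the paper's proof: rescale each $\mu_i$ by an individual isotropic dilation so that all the ellipsoid thresholds coincide, average the rescaled measures, apply Theorem \ref{mainst} to the average, and transfer back using the multi-homogeneity of $\det(0,\cdot)$. The only differences are cosmetic --- you normalize the thresholds to their geometric mean (making the product of dilations equal to $1$) where the paper normalizes them to $1$ and tracks the product $a_1\cdots a_k$ in the superscript, and your domination $\mu_1\times\cdots\times\mu_k\leq k^k\,\mu\times\cdots\times\mu$ yields the constant $k^k$ in place of the paper's $k^k/k!$.
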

\begin{proof}
For any probability measure $\mu$ on ${\cal H}$ and any $a > 0$, let $\mu^a$ be defined by
\[ \int f(x) d \mu^a(x) := \int f(a^{-1} x) d \mu(x), \]
that is, $\mu^a$ is an isotropic dilation of $\mu$ (the support of $\mu^a$ is $a$ times the support of $\mu$).  The homogeneity of the standard of the determinant guarantees that $I^\delta_{k}(\mu_1^{a_1},\ldots,\mu_k^{a_k}) = I^{ a_1\cdots a_k \delta}_{k}(\mu_1,\ldots,\mu_k)$.
For a given probability measure $\mu_i$, let $a_i$ be the infimum of $|B_i|_k^{1/k}$ as $B_i$ ranges over all centered ellipsoids $B_i$ satisfying $\mu_i(B_i) \geq \epsilon$.  With this scaling, every centered ellipsoid $B$ for which $\mu_i^{a_i}(B) \geq \epsilon$ must have $k$-content at least equal to $1$.  Now consider the measure $\mu := \frac{1}{k} \left( \mu_1^{a_1} + \cdots + \mu_k^{a_k} \right)$.
If $B$ is a centered ellipsoid for which $\mu(B) \geq \epsilon$, then there must also be some index $i$ for which $\mu_i^{a_i}(B) \geq \epsilon$; hence $B$ must have $k$-content at least equal to one.
Thus it follows that
\begin{align*}
I^{c_k a_1 \cdots a_k}_{k}(\mu_1,\ldots,\mu_k)  = I^{c_k}_{k} (\mu_1^{a_1}, \ldots, \mu_d^{a_d})  \leq \frac{k^k}{k!} I^{c_k}_{k} (\mu,\ldots,\mu) \leq C_k \frac{k^k}{k!} \epsilon,
\end{align*}
which gives precisely the desired inequality.
\end{proof}
The necessary preparations now being complete, the restricted weak-type analog of theorem \ref{equivtheorem} can now be established for the $k$-linear functionals
\[ \widetilde{T}^{-\gamma}_{\mu,k} (f_1,\ldots,f_k) := \int \cdots \int \prod_{j=1}^k f_j(y_j) \frac{d \mu(y_1) \cdots d \mu (y_k)}{(\det(0,y_1,\ldots,y_k))^{\gamma}} \]
under the assumption that $\mu$ is $k$-admissible.  It is perhaps worth noting that, in general, one can expect only restricted weak-type inequalities to hold for these functionals, unless it is {\it a priori} known that $\mu$ is compactly supported away from $0$ (which is precisely the case in the work of Drury \cite{drury1988}).  However, by Fubini's theorem and Christ's multilinear trick, the restricted weak-type estimates are sufficient to imply theorem \ref{equivtheorem}.
\begin{theorem}
When $\mu$ is $k$-admissible and $\alpha$ is any positive real number, the following are equivalent:
\begin{enumerate}
\item The measure $\mu$ is $k$-curved at the origin with exponent $\alpha$.
\item For all $\gamma \in (0,\alpha)$, there is a constant $C < \infty$ such that
\begin{equation}
\widetilde{T}^{-\gamma}_{\mu,k} ( \chi_{E_1},\ldots,\chi_{E_k}) \leq C \prod_{j=1}^{k} \mu(E_j)^{1 - \frac{\gamma}{k \alpha}} \label{rwt1}
\end{equation}
for all measurable sets $E_1,\ldots,E_k$.
\item There exists a $\gamma \in (0,\infty)$ and a $C < \infty$ such that \eqref{rwt1} holds.
\item For all $\gamma \in (0,\infty)$, there exists a constant $C > 0$ such that
\begin{equation}
\widetilde{T}^{\gamma}_{\mu,k} ( \chi_{E_1},\ldots,\chi_{E_k}) \geq C \prod_{j=1}^k \mu(E_j)^{1 + \frac{\gamma}{k \alpha}} \label{rwt2}
\end{equation}
for all measurable sets $E_1,\ldots,E_k$.
\item There exists a $\gamma \in (0,\infty)$ and a $C > 0$ such that the inequality \eqref{rwt2} holds.
\end{enumerate}
\end{theorem}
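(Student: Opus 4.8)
The plan is to prove the chain $(1)\Rightarrow(2)\Rightarrow(3)\Rightarrow(1)$ together with $(1)\Rightarrow(4)\Rightarrow(5)\Rightarrow(1)$, which closes the full equivalence. The implications $(2)\Rightarrow(3)$ and $(4)\Rightarrow(5)$ are immediate: one specializes the universally quantified $\gamma$ to a single admissible value. All of the real content lies in the two sufficiency implications $(1)\Rightarrow(2)$ and $(1)\Rightarrow(4)$, which extract the restricted weak-type inequalities from the uniform sublevel estimate of Corollary \ref{maincor}, and in the two necessity implications $(3)\Rightarrow(1)$ and $(5)\Rightarrow(1)$, which run the same comparison in reverse on a well-chosen family of test sets.

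For $(1)\Rightarrow(2)$ the idea is to linearize the singular kernel by the layer-cake identity $D^{-\gamma}=\gamma\int_0^\infty\lambda^{-\gamma-1}\chi_{\{D<\lambda\}}\,d\lambda$ applied to $D=\det(0,y_1,\ldots,y_k)$. Writing $\mu_j:=\mu|_{E_j}/\mu(E_j)$ and $M:=\prod_j\mu(E_j)$, this rewrites the functional as
\[
\widetilde{T}^{-\gamma}_{\mu,k}(\chi_{E_1},\ldots,\chi_{E_k})=\gamma M\int_0^\infty\lambda^{-\gamma-1}I^\lambda_k(\mu_1,\ldots,\mu_k)\,d\lambda.
\]
Hypothesis (1) forces every centered ellipsoid $B$ with $\mu_j(B)\geq\epsilon$ to satisfy $|B|_k\geq(\epsilon\mu(E_j)/C_\alpha)^{1/\alpha}$, so Corollary \ref{maincor} yields the two-sided bound $I^\lambda_k(\mu_1,\ldots,\mu_k)\leq\min(1,\,C'\lambda^\alpha M^{-1/k})$. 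Splitting the $\lambda$-integral at the transition point $\lambda_0\sim M^{1/(k\alpha)}$ and carrying out the two elementary integrations produces exactly $\widetilde{T}^{-\gamma}_{\mu,k}(\chi_{E_1},\ldots,\chi_{E_k})\leq C M^{1-\gamma/(k\alpha)}$, which is \eqref{rwt1}. Here the hypothesis $\gamma<\alpha$ is precisely what makes $\int_0^{\lambda_0}\lambda^{\alpha-\gamma-1}\,d\lambda$ converge at the origin, and the one point requiring care is the exponent bookkeeping: checking that the singular piece $\int_0^{\lambda_0}$ and the regular piece $\int_{\lambda_0}^\infty$ both carry the common power $M^{-\gamma/(k\alpha)}$.

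The implication $(1)\Rightarrow(4)$ is the mirror image. The same sublevel bound gives $1-I^\lambda_k(\mu_1,\ldots,\mu_k)\geq\tfrac12$ for $\lambda\leq\lambda_1\sim M^{1/(k\alpha)}$, and inserting this into $\int D^\gamma\,d(\mu_1\times\cdots\times\mu_k)=\gamma\int_0^\infty\lambda^{\gamma-1}(1-I^\lambda_k)\,d\lambda$ and integrating only over $[0,\lambda_1]$ produces the lower bound $\widetilde{T}^{\gamma}_{\mu,k}(\chi_{E_1},\ldots,\chi_{E_k})\geq c\,M^{1+\gamma/(k\alpha)}$, i.e.\ \eqref{rwt2}. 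No upper restriction on $\gamma$ enters, since $\lambda^{\gamma-1}$ is integrable near the origin for every $\gamma>0$, consistent with the quantifier ``for all $\gamma\in(0,\infty)$''.

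Finally, both necessity implications rest on a single geometric observation: if $y_1,\ldots,y_k$ all lie in a centered ellipsoid $B$, then $\det(0,y_1,\ldots,y_k)\leq C_k|B|_k$, because the $k$-content bounds the $k$-volume spanned by vectors in $B$ (the body $B$ is the image of the unit ball under the diagonal map with the $\ell_i$ as singular values, so the spanned $k$-volume is maximized along the $k$ longest axes). Taking $E_1=\cdots=E_k=B$ and using this pointwise bound from below gives $\widetilde{T}^{-\gamma}_{\mu,k}(\chi_B,\ldots,\chi_B)\geq(C_k|B|_k)^{-\gamma}\mu(B)^k$; comparing with the upper bound \eqref{rwt1} supplied by (3) and cancelling $\mu(B)^k$ yields $\mu(B)\leq C|B|_k^\alpha$, which is (1). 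The implication $(5)\Rightarrow(1)$ is identical, except that now $\det^\gamma\leq(C_k|B|_k)^\gamma$ on $B^k$ provides an upper bound for $\widetilde{T}^{\gamma}_{\mu,k}(\chi_B,\ldots,\chi_B)$ to be compared with the lower bound \eqref{rwt2}. (A standard exhaustion reduces to test sets of finite positive measure; ellipsoids of infinite $k$-content make (1) trivial, while those of zero $k$-content lie in an affine $(k-1)$-plane and have $\mu$-measure zero by $k$-admissibility.) I expect $(1)\Rightarrow(2)$ to be the main obstacle: everything hinges on having the uniform sublevel estimate of Corollary \ref{maincor} in hand, and on the exponent arithmetic aligning so that the singular and regular parts of the layer-cake integral contribute the same power of $M$.
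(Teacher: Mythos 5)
Your proposal is correct, and its engine is the same as the paper's: both directions of substance rest on Corollary \ref{maincor} applied to the normalized measures $\mu|_{E_j}/\mu(E_j)$ (giving the sublevel bound $I^\lambda_k\leq\min(1,C'\lambda^\alpha M^{-1/k})$, which you integrate against the layer-cake representation of $D^{\mp\gamma}$ where the paper sums a dyadic decomposition in the size of the determinant --- a purely cosmetic difference), and on testing against $E_1=\cdots=E_k=B$ together with the pointwise bound $\det(0,y_1,\ldots,y_k)\leq C_k|B|_k$ on $B^k$, which is the paper's inequality \eqref{fivetoone}. Where you genuinely diverge is in how the cycle of implications is closed. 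The paper proves only $1\Rightarrow 2$ and $5\Rightarrow 1$ ``by hand'' and obtains the lower bounds $4$ and $5$ from the upper bounds $2$ and $3$ via the Cauchy--Schwartz inequality
\[ \prod_{j=1}^k \mu(E_j)^2 \leq \left( \int_{E_1\times\cdots\times E_k} \det(0,\cdot)^{\gamma}\,d\mu^k \right)\left( \int_{E_1\times\cdots\times E_k} \det(0,\cdot)^{-\gamma}\,d\mu^k \right), \]
whereas you prove $1\Rightarrow 4$ directly (observing that $I^{\lambda_1}_k\leq\tfrac12$ for $\lambda_1\sim M^{1/(k\alpha)}$ forces $\int D^\gamma\gtrsim\lambda_1^\gamma$) and $3\Rightarrow 1$ directly by the same ellipsoid test used for $5\Rightarrow 1$. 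Your route costs one extra (easy) argument but makes the lower-bound estimate self-contained and exhibits why no upper restriction on $\gamma$ is needed in condition $4$; the paper's Cauchy--Schwartz step is slicker and makes the duality between \eqref{rwt1} and \eqref{rwt2} explicit. The only place you are thinner than the paper is the justification of $\det(0,y_1,\ldots,y_k)\leq C_k|B|_k$: your singular-value heuristic is correct (and works in the Hilbert-space setting via exterior powers), but the paper proves it carefully through the Gram-determinant expansion \eqref{expandi}; in a full write-up you should supply one of these two arguments rather than assert the maximization claim.
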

\begin{proof}
Conditions 2 and 4 immediately imply conditions 3 and 5, respectively.  Likewise, conditions 2 and 3 imply conditions 4 and 5, respectively, by means of Cauchy-Schwartz:
\[ \prod_{j=1}^k \mu(E_j)^2 \leq \left( \int_{E_1 \times \cdots \times E_k} \det(0,\cdot)^{\gamma} d \mu^k \right) \left( \int_{E_1 \times \cdots \times E_k} \det(0,\cdot)^{-\gamma} d \mu^k \right) \]
(the $k$-admissibility of $\mu$ guarantees that $1 = \det(0,\cdot)^{\gamma} \det(0,\cdot)^{-\gamma}$ $\mu$-a.e.).

To show that condition 5 implies condition 1, it suffices to test the integral in the situation where $E_1 = \cdots = E_k = B$ for any centered ellipsoid $B$.  The conclusion will follow immediately once it can be shown that
\begin{equation} \det(0,y_1,\ldots,y_k) \leq C_k |B|_k \label{fivetoone}
\end{equation}
for $(y_1,\ldots,y_k) \in B \times \cdots \times B$.
Given the centered ellipsoid $B$, and let $\omega_1,\ldots,\omega_k$ be chosen so that the corresponding lengths $\ell_1,\ldots,\ell_k$ satisfy $\ell_1 \cdots \ell_k \geq \frac{1}{2} |B|_k$ (it suffices to assume that $|B|_k$ is finite).  It follows that all other lengths of $B$ in the remaining directions are no longer than twice the minimum of $\ell_1,\ldots,\ell_k$.  In particular, this means that for any vector in the ellipsoid, its length will be less than some fixed constant times $\rho := \min_{i=1,\ldots,k} \ell_i$ after it is projected onto the subspace orthogonal to $\omega_1,\ldots,\omega_k$.
For any vector $y$, let $y^{(j)} = \left< y, \omega_j \right> \omega_j$ for $j=1,\ldots,k$ and $y^{(k+1)} = y - \sum_{j=1}^{k} y^{(j)}$.  The definition of the $\det$ function guarantees that
\begin{equation} \det(0,y_1,\ldots,y_k)^2 = \sum_{\beta} \det \left( \left< y, y^{(\beta)} \right> \right) \label{expandi}
\end{equation}
where $\beta$ ranges over all multiindices of length $k$ with entries in $\{1,\ldots,k+1\}$ and $\det (\left<y,y^{(\beta)} \right>)$ denotes the standard $k \times k$ determinant of the matrix whose $(i,j)$-entry is the inner product of $y_i$ with $y_j^{(\beta_j)}$.  If any two entries of the multiindex $\beta$ have the same value and that value is one of $\{1,\ldots,k\}$, the determinant will necessarily be zero (since there will be linearly dependent columns in the matrix).  Thus it may be assumed that the entries of $\beta$ must be distinct unless they happen to equal $k+1$.  Furthermore, if some entry of $\beta$ takes the value $i$ for $i \neq k+1$, then the Euclidean length of the corresponding column in $\R^{k}$ is at most $\sqrt{k} \ell_i^2$ (since the entries will be products of the form $\left<y_l,\omega_i \right> \left<y_{l'},\omega_i \right>$).  If a particular index takes the value $i=k+1$, then the Euclidean norm in $\R^k$ of the corresponding column is at most $C_k \rho^2$ since 
\[\left<y_l, y_{l'}^{(k+1)} \right> = \left< y_l^{(k+1)},y_{l'}^{(k+1)} \right>. \]
Consequently, any single term on the right-hand side of \eqref{expandi} is bounded above by a constant depending on $k$ only times the product $(\ell_1 \cdots \ell_k)^2$.  This gives precisely \eqref{fivetoone}.

Finally, consider the statement that condition 1 implies condition 2.  Let $||\mu||_{0}$ be the smallest value of $C_\alpha$ such that \eqref{curved} holds for all centered ellipsoids (for fixed values of $k$ and $\alpha$).
Let $\mu_E$ be the restriction of $\mu$ to the set $E$, renormalized to have mass $1$.  Suppose $B$ be a centered ellipsoid with $\mu_E(B) \geq \epsilon$.  It follows that $\epsilon \mu(E) \leq \mu(E \cap B) \leq \mu(B) \leq ||\mu||_0 |B|_k^\alpha$.  Hence, for any sets $E_1,\ldots, E_k$ and any $\epsilon > 0$, corollary \ref{maincor} guarantees that
\[I^{c_k \delta}_{k}(\mu_{E_1},\ldots,\mu_{E_k}) \leq C_k \epsilon \]
provided $\delta \leq  ||\mu||_0^{-1/\alpha} \epsilon^{1/\alpha} \prod_{i=1}^k \mu(E_i)^{1/(k\alpha)}$.  If $\delta$ is considered fixed and the minimal $\epsilon$ satisfying this inequality is chosen, it follows that, for any $\delta > 0$, 
\[I^{\delta}_{k}(\mu_{E_1},\ldots,\mu_{E_k}) \leq  C_k ||\mu_0|| \delta^{\alpha} \prod_{j=1}^k \mu(E_j)^{-\frac{1}{k}}. \]
Hence the following two estimates hold:
\begin{align*}
 \int_{E_1 \times \cdots \times E_k} 2^{-\gamma l} \chi_{\det(0,\cdot) \sim 2^{l}} d \mu^{k} & \leq C_{k} ||\mu||_0 2^{(\alpha - \gamma) l} \prod_{j=1}^k \mu(E_j)^{1-\frac{1}{k}}, \\
\int_{E_1 \times \cdots \times E_k} 2^{-\gamma l} \chi_{\det(0,\cdot) \sim 2^{l}} d \mu^{k} & \leq  2^{- \gamma l} \prod_{j=1}^k \mu(E_j). 
\end{align*}
Fix $\gamma$ satisfying $\alpha > \gamma$.  Condition 2 follows by summing over $l$ using the first inequality for $l \geq l_0$ and the second when $l < l_0$ and minimizing the result as a function of $l_0$.  In particular, \eqref{rwt1} holds with $C = C_{k,\alpha,\gamma} ||\mu||_0^{\gamma/\alpha}$.
\end{proof}

\section{Connections to other work}
\label{conditionsec}
This final section is devoted to the illustration of several important connections between theorem \eqref{equivtheorem} and earlier work.  The first to be addressed is the question of Gaussian extremizability.  It has been known for some time that many of the most important geometrically-motivated integral inequalities, Young's inequality for convolutions and Brascamp-Lieb inequalities in general, have Gaussian extremals.  This result is originally due to Lieb \cite{lieb1990}, and has motivated related work the context of heat-flow monotonicity by  Bennett, Bez, Carbery, and Hundertmark  \cite{bbch2008} and Bennett, Bez, and Carbery \cite{bbc2008} as well as work by Bennett and Bez on nonlinear Brascamp-Lieb inequalities \cite{bb2009}.  While Gaussian functions cannot be expected to be perfect extremizers of the inequality \eqref{multi} in the general case, it nevertheless suffices to test only on Gaussians:
\begin{theorem}
Suppose $\mu$ is a $k$-admissible measure on $\R^d$ for some $d$.  For fixed values $0 < \alpha < \gamma$, the inequality \eqref{rwt1} holds for some constant $C$ uniformly in the sets $E_1,\ldots,E_k$ if and only if there is a constant $C' < \infty$ such that
\begin{equation} \int \cdots \int \prod_{i=1}^k f(y_i)  \frac{d \mu(y_1) \cdots d \mu(y_k)}{ (\det(0,y_1,\ldots,y_k))^{\gamma}}  \leq C' \left( \int f(y) d \mu(y) \right)^{k - \frac{\gamma}{\alpha}} \label{gauss1} \end{equation}
uniformly for all $f(y) := e^{-||Q y||^2}$, where $Q$ is any $d \times d$ real matrix.  Moreover, for exponents $p_1,\ldots,p_{k+1}$ satisfying $\frac{1}{p_i'} < \frac{\gamma}{k \alpha}$ and $\sum_{i=1}^{k+1} \frac{1}{p_i'} = \frac{\gamma}{\alpha}$, the inequality \eqref{multi} holds for some constant $C < \infty$ if and only if there exists a constant $C'$ such that
\begin{equation} \int \cdots \int \prod_{i=1}^k f(y_i) \frac{d \mu(y_1) \cdots d \mu(y_k) d \mu(y_{k+1})}{ (\det(y_1,\ldots,y_k,y_{k+1}))^{\gamma}}  \leq C' \prod_{i=1}^{k+1} ||f||_{L^{p_i}(\mu)} \label{gauss2} \end{equation}
uniformly for all $f(y) := e^{- ||Q y - y_0||^2}$ where $Q$ is any real $d \times d$ matrix and $y_0 \in \R^d$.
\end{theorem}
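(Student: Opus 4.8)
The plan is to treat the two stated equivalences in parallel, reducing the second (involving \eqref{multi} and \eqref{gauss2}) to the first by the same translation-invariance and Fubini argument (Christ's multilinear trick) already used to pass between $\widetilde{T}^{-\gamma}_{\mu,k}$ and $T^{-\gamma}_{\mu,k}$: the translated Gaussians $e^{-||Qy-y_0||^2}$ are exactly the test functions produced when the variable $y_{k+1}$ is frozen at $y_0$ and the origin is moved there, so it suffices to establish that \eqref{rwt1} holds if and only if \eqref{gauss1} holds for all $f = e^{-||Qy||^2}$. I would isolate this last equivalence as the heart of the matter, the implication \eqref{multi}$\Rightarrow$\eqref{gauss2} being immediate since Gaussians are admissible inputs.

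The forward implication \eqref{rwt1}$\Rightarrow$\eqref{gauss1} is the easy direction. The key observation is that every superlevel set $\{y : f(y) > t\}$ of a Gaussian $f = e^{-||Qy||^2}$ is precisely a centered ellipsoid, namely $\{ y : ||Qy|| < \sqrt{\log(1/t)}\}$, on which \eqref{rwt1} may be tested. Writing each factor $f(y_i) = \int_0^1 \chi_{\{f>t_i\}}(y_i)\,dt_i$ and applying Tonelli, one obtains
\[ \widetilde{T}^{-\gamma}_{\mu,k}(f,\ldots,f) = \int_{[0,1]^k} \widetilde{T}^{-\gamma}_{\mu,k}(\chi_{\{f>t_1\}},\ldots,\chi_{\{f>t_k\}})\,dt \leq C \Big( \int_0^1 \mu(\{f>t\})^{1-\frac{\gamma}{k\alpha}}\,dt \Big)^{k}. \]
Since $k(1-\frac{\gamma}{k\alpha}) = k - \frac{\gamma}{\alpha}$, it remains only to apply Jensen's inequality for the concave map $x \mapsto x^{1-\gamma/(k\alpha)}$ on the probability space $([0,1],dt)$, giving $\int_0^1 \mu(\{f>t\})^{1-\gamma/(k\alpha)}dt \leq (\int f\,d\mu)^{1-\gamma/(k\alpha)}$; raising to the $k$-th power yields \eqref{gauss1}. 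No curvature hypothesis beyond \eqref{rwt1} itself is needed here.

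For the converse \eqref{gauss1}$\Rightarrow$\eqref{rwt1} I would route through the $k$-curvature condition \eqref{curved} at the origin and invoke its equivalence with \eqref{rwt1} established in the preceding theorem. Given a centered ellipsoid $B$, choose $Q$ so that $B = \{y : ||Qy||<1\}$, whence $f = e^{-||Qy||^2}\geq e^{-1}\chi_B$. Since $\det(0,y_1,\ldots,y_k) \leq C_k|B|_k$ on $B\times\cdots\times B$ by \eqref{fivetoone}, the factor $\det^{-\gamma}$ is bounded below by $(C_k|B|_k)^{-\gamma}$ there, so that
\[ (C_k|B|_k)^{-\gamma}\mu(B)^k \leq \widetilde{T}^{-\gamma}_{\mu,k}(\chi_B,\ldots,\chi_B) \leq e^{k}\,C'\Big(\int f\,d\mu\Big)^{k-\frac{\gamma}{\alpha}}, \]
i.e.\ $\mu(B)^k \leq C_1 |B|_k^{\gamma}(\int f\,d\mu)^{k-\gamma/\alpha}$. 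The remaining task is to control the Gaussian mass $\int f\,d\mu = \int_0^\infty \mu(\{y : ||Qy||<\sqrt{s}\})e^{-s}\,ds$, and this is where the real work lies. I would run a bootstrap in the curvature exponent: starting from any crude power bound $\mu(B')\leq K|B'|_k^{\beta}$ (valid with $\beta = \gamma/k$ once $\mu$ is finite, by the trivial estimate $\int f\,d\mu \leq \mu({\cal H})$), the dilation identity $|\{y : ||Qy||<\sqrt{s}\}|_k = s^{k/2}|B|_k$ gives $\int f\,d\mu \leq K|B|_k^{\beta}\,\Gamma(1+\tfrac{k\beta}{2})$, and feeding this into the displayed inequality improves the exponent to $\beta' = \frac{\gamma}{k} + (1-\frac{\gamma}{k\alpha})\beta$ with a comparable constant. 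The affine map $\beta \mapsto \frac{\gamma}{k}+(1-\frac{\gamma}{k\alpha})\beta$ has slope in $(0,1)$ and fixed point exactly $\alpha$, so iterating drives the exponent up to $\alpha$ while the constants converge to a limit independent of $K$; in the limit $\mu(B)\leq K_*|B|_k^{\alpha}$ with $K_*$ depending only on $k,\alpha,\gamma$ and $C'$. This is precisely \eqref{curved}, and \eqref{rwt1} follows.

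The main obstacle is the bootstrap's initialization and its extension beyond finite measures: the contraction needs a finite starting exponent bound, which is immediate for finite $\mu$ but must be propagated to general $\sigma$-finite $\mu$ by a truncation and limiting argument, with care that the limiting constant $K_*$ is independent of the truncation (which it is, since the contraction forgets its initial data). I would also verify that the translated-Gaussian reduction for \eqref{gauss2} respects the open range $\frac{1}{p_i'}<\frac{\gamma}{k\alpha}$, $\sum_i\frac{1}{p_i'}=\frac{\gamma}{\alpha}$, so that interpolation off the restricted weak-type endpoint \eqref{rwt1} recovers the full strong-type bound \eqref{multi}.
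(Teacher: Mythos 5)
Your forward direction (\eqref{rwt1} $\Rightarrow$ \eqref{gauss1} via the layer-cake decomposition of the Gaussian into characteristic functions of centered ellipsoids, followed by Jensen for the concave map $x \mapsto x^{1-\gamma/(k\alpha)}$) is correct and in fact supplies a detail the paper leaves implicit. Your overall architecture for the converse --- route through the curvature condition \eqref{curved} and invoke the equivalence theorem of Section 2 --- also matches the paper. The difference, and the problem, lies in how you extract curvature from \eqref{gauss1}.

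Your extraction step discards too much. From \eqref{gauss1} you keep only $\mu(B)^k \leq C_1 |B|_k^{\gamma} \left( \int f \, d\mu \right)^{k-\gamma/\alpha}$, and then try to close the loop by bootstrapping a power bound on $\mu$ of ellipsoids into the Gaussian mass $\int f\,d\mu = \int_0^\infty e^{-s}\mu(\{\|Qy\|^2 < s\})\,ds$. This works for finite $\mu$, but the initialization genuinely fails for infinite $\sigma$-finite $\mu$, and your proposed truncation repair does not go through: if $\mu_R$ is a restriction of $\mu$, then both sides of \eqref{gauss1} decrease, and since the exponent $k - \gamma/\alpha$ is positive the right-hand side decreases in the unhelpful direction, so \eqref{gauss1} for $\mu$ does \emph{not} imply \eqref{gauss1} for $\mu_R$ with the same (or any controlled) constant. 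Worse, the weakened inequality you bootstrap from is genuinely insufficient on its own: for a radial measure with $\phi(s) := \mu(B(0,s)) = e^{s}$, one checks that $\phi(s)^k \leq C s^{k\gamma}\left(\int_0^\infty e^{-u}\phi(s\sqrt{u})\,du\right)^{k-\gamma/\alpha}$ is satisfied for large $s$ (the integral behaves like $e^{s^2/4}$), so no iteration scheme starting from that inequality alone can recover $\phi(s) \lesssim s^{k\alpha}$. The paper avoids this by extracting a \emph{stronger} consequence of \eqref{gauss1}: the pointwise factorization $\det(0,y_1,\ldots,y_k) \leq C_k |Q|_k \prod_j \|Q y_j\|$ (a corollary of \eqref{fivetoone} and homogeneity) lower-bounds the kernel by a product of weights $\|Qy_j\|^{-\gamma}$ that can be absorbed into the Gaussians at an $\epsilon$-loss in the exponent, yielding
\[ |Q|_k^{-\gamma} \left( \int e^{-(1+\epsilon)\|Qy\|^2} d\mu \right)^{k} \leq C \left( \int e^{-\|Qy\|^2} d\mu \right)^{k - \frac{\gamma}{\alpha}}, \]
in which the \emph{Gaussian integral itself} (not the much smaller $\mu(B)$) appears to the $k$-th power on the left. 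A single division then gives $\int e^{-(1-\epsilon)\|Qy\|^2} d\mu \lesssim |Q|_k^{\alpha}$ with no initialization required, and the Proposition converts this into \eqref{curved}. You should replace your bootstrap with this one-shot self-improvement, or else find a substitute for the missing a priori polynomial bound; as written, the converse is not proved for infinite measures. The same repair is needed in your treatment of \eqref{gauss2} $\Rightarrow$ \eqref{multi}, where the paper uses the translated analogue $\det(y_1,\ldots,y_{k+1}) \leq C_k |Q|_k \prod_{j=2}^{k+1}\|Qy_j - Qy_1\|$.
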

For any $d \times d$ real matrix $Q$, let $|Q|_k$ equal the reciprocal of the product of the $k$ smallest eigenvalues of $(Q^* Q)^{1/2}$.  By virtue of \eqref{fivetoone} and the homogeneity of $\det(0,y_1,\ldots,y_k)$, it must be the case that
\[ \det(0,y_1,\ldots,y_k) \leq C_k |Q|_k \prod_{j=1}^k ||Q y_j|| \]
since $|Q|_k$ simply equals (up to a constant) the $k$-content of the ellipsoid given by the sublevel set $\set{x \in \R^d}{||Q x||^2 \leq 1}$.  If \eqref{gauss1} holds, for example, then the inequality just established for the $\det$ function guarantees that
\[ C_k^{-\gamma} |Q|_k^{-\gamma} \left( \int ||Q y||^{-\gamma} e^{-||Q y||^2} d \mu(y) \right)^k \leq C' \left( e^{- ||Q y||^2} d \mu(y) \right)^{k - \frac{\gamma}{\alpha}}. \]
Now $||x||^{-\gamma} e^{\epsilon ||x||^2} \geq C_{\gamma,\epsilon}$ uniformly in $||x||$, so it must ultimately be the case that
\[ \int e^{-(1-\epsilon) || Q y||^2} d \mu(y) \leq C_{k,\gamma,\epsilon} C'^{\frac{\alpha}{\gamma}} |Q|_k^{\alpha} \]
(since, on the right-hand side, $e^{- ||Q y||^2}$ may be replaced with $e^{-(1-\epsilon)||Q y||^2}$ for free).
As for \eqref{gauss2}, the inequality $\det(y_1,\ldots,y_{k+1}) \leq C_k |Q|_k \prod_{j=2}^{k+1} ||Q y_j - Q y_1||$ along with the inequality $\prod_{j=2}^{k+1} ||x_j - x_1||^{-\gamma} \prod_{j=1}^{k+1} e^{\epsilon ||x_j - y_0||^2} \geq C_{\gamma,\epsilon,k}$ for all $y_0,x_1,\ldots,x_{k+1}$ give 
\[ \int e^{-(1-\epsilon)||Q y - y_0||^2} d \mu(y) \leq C_{k,\gamma,\epsilon} C'^{\frac{\alpha}{\gamma}} |Q|_k^{\alpha} \]
in just the same way that the corresponding inequality was derived in the case of \eqref{gauss1}.  The proof of the theorem is then complete once the following proposition is established:
\begin{proposition}
Suppose $\mu$ is a $k$-admissible measure on $\R^d$.  Then $\mu$ is $k$-curved with exponent $\alpha$ if and only if there exists a constant $C < \infty$ such that
\begin{equation}
 \int e^{- ||Q x-x_0||^2} d \mu(x) \leq C |Q|_k^{\alpha} \label{gauss0}
\end{equation}
for all $d \times d$ real matrices $Q$ and all $x_0 \in \R^d$.  In addition, $\mu$ is $k$-curved at the origin if and only if \eqref{gauss0} holds for all $Q$ when $x_0 = 0$.
\end{proposition}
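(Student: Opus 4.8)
The plan is to prove both equivalences simultaneously, since the centered statement (testing only $x_0=0$ against centered ellipsoids) is the general argument with the translation deleted. The conceptual point driving everything is that $e^{-\|Qx-x_0\|^2}$ is a smoothed indicator of an ellipsoid: its superlevel sets are exactly the ellipsoids $B_\lambda:=\{x:\|Qx-x_0\|\le\lambda\}$, and the $k$-content of $B_1$ is, up to the scaling already recorded just before the proposition, the quantity $|Q|_k$. Concretely, if $Q$ is invertible with singular values $\sigma_1\le\cdots\le\sigma_d$, then $B_\lambda$ is the ellipsoid centered at $Q^{-1}x_0$ with semiaxis lengths $\lambda/\sigma_i$, so its $k$ largest lengths correspond to the $k$ smallest $\sigma_i$ and $|B_\lambda|_k=\lambda^k|Q|_k$. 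If $Q$ is singular then $\sigma_1=0$, hence $|Q|_k=\infty$ and \eqref{gauss0} is vacuous; so throughout the forward direction I may assume $Q$ invertible.

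For the implication ($k$-curved $\Rightarrow$ \eqref{gauss0}) I would use the distribution-function identity. Writing $g(x)=e^{-\|Qx-x_0\|^2}$, one has $\int g\,d\mu=\int_0^1\mu(\{g>t\})\,dt$ with $\{g>t\}=B_{\sqrt{\log(1/t)}}$, so the curvature hypothesis $\mu(B_\lambda)\le C_\alpha\lambda^{k\alpha}|Q|_k^\alpha$ gives
\[ \int e^{-\|Qx-x_0\|^2}\,d\mu\le C_\alpha|Q|_k^\alpha\int_0^1\bigl(\log(1/t)\bigr)^{k\alpha/2}\,dt=C_\alpha\,\Gamma\!\left(\tfrac{k\alpha}{2}+1\right)|Q|_k^\alpha, \]
which is \eqref{gauss0} with an explicit constant. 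When $x_0=0$ the sets $B_\lambda$ are centered, so curvature at the origin alone suffices for the centered version.

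For the converse I would exhibit, for each ellipsoid $B$, a single Gaussian that is bounded below on $B$ and whose parameter matches $|B|_k$. Given $B$ with center $x_*$, orthonormal axes $\{\omega_i\}$ and lengths $\{\ell_i\}$, I reduce (see below) to the case in which all $\ell_i$ are finite and at least $k$ are nonzero. Define the symmetric matrix $Q$ by $Q\omega_i=\ell_i^{-1}\omega_i$ when $\ell_i\in(0,\infty)$ and $Q\omega_i=M\omega_i$ when $\ell_i=0$, for a large $M$ to be fixed, and set $x_0=Qx_*$. On $B$ the coordinates in the zero-length directions are pinned, so $\|Qx-x_0\|^2=\sum_{\ell_i\in(0,\infty)}\ell_i^{-2}|\langle x-x_*,\omega_i\rangle|^2\le 1$, whence $g\ge e^{-1}$ on $B$; integrating \eqref{gauss0} over $B$ then yields $\mu(B)\le eC|Q|_k^\alpha$. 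Taking $M$ larger than every $\ell_i^{-1}$ forces the $k$ smallest singular values of $Q$ to be the reciprocals of the $k$ largest finite lengths, so $|Q|_k=|B|_k$ and $\mu(B)\le eC|B|_k^\alpha$. For centered $B$ one has $x_*=0$, so $x_0=0$, giving the centered statement.

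The main obstacle is not the analytic core—the layer-cake bound and the single-Gaussian lower bound are both short—but the bookkeeping of degenerate axes, namely lengths equal to $0$ or $\infty$. I would dispose of these first. If fewer than $k$ of the lengths are nonzero, then $B$ lies in an affine subspace of dimension at most $k-1$, so $k$-admissibility forces $\mu(B)=0=|B|_k^\alpha$ and the curvature inequality is trivial. If some $\ell_i=\infty$ while at least $k$ lengths are nonzero, then choosing that infinite index together with $k-1$ further nonzero ones gives $|B|_k=\infty$, so the inequality is again trivial. After these removals one is exactly in the regime where the construction of $Q$ above applies, and the one point deserving care is verifying that the inserted large entries $M$ and the pinned zero-length directions do not perturb the product of the $k$ smallest singular values, so that $|Q|_k=|B|_k$ holds on the nose; this is elementary once the ordering of the singular values is written out.
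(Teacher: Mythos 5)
Your proof is correct and follows essentially the same route as the paper's: the layer-cake (distribution function) identity converts the curvature bound on the sublevel ellipsoids of $\|Qx-x_0\|$ into the Gaussian estimate, and conversely the Gaussian's lower bound $e^{-1}$ on the unit ellipsoid, together with the identification of $|Q|_k$ with the $k$-content of that ellipsoid, recovers the curvature condition. Your explicit treatment of the degenerate axes ($\ell_i=0$ or $\infty$) fills in details the paper leaves implicit, but the argument is the same.
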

\begin{proof}
Since every centered ellipsoid $B$ may be realized as a sublevel set of the form $\set{x \in \R^d}{||Q x||^2 \leq 1}$ for some $Q$ (and vice-versa), the proposition follows immediately from the inequalities:
\begin{align*}
 e^{-1} \mu( \set{x \in \R^d}{ ||Q x||^2 \leq 1} ) \leq  & \int e^{- ||Q x||^2} d \mu(x) \\
& = 2 \int_0^\infty t e^{-t^2} \mu( \set{x \in \R^d}{ ||Q x||^2 \leq t} ) dt. 
\end{align*}
The first inequality guarantees that the $\mu$-measure of the ellipsoid will be controlled by the integral of the Gaussian, and the second controls the Gaussian integral under the assumption that $\mu( \set{x \in \R^d}{||Q x||^2 \leq t}) \leq C_\alpha t^{k \alpha} |Q|_k^{\alpha}$ (which follows from the curvature of $\mu$).  The non-centered case follows immediately by translation.
\end{proof}

To conclude, it is worth noting that many of the geometric conditions on curves and hypersurfaces that have appeared in the literature of oscillatory integrals and geometric averaging operators are themselves sufficient conditions to establish $k$-curvature.  The curvature conditions identified by Nagel, Seeger, and Wainger \cite{nsw1993} as well as Iosevich and Sawyer \cite{is1997} in the context of $L^p$ bounds for maximal averages are two such examples.  In these cases, the central quantity of study was, roughly speaking, the growth rate of the distance from a hypersurface to a specified tangent plane (or, more precisely, some average growth rate over all tangent planes).  Such conditions are sufficient to prove an inequality of the form \eqref{curved}, but are, in general, not necessary.  Two examples are included below for completeness:
\begin{proposition}
Suppose $\mu$ is $k$-admissible. For any positive integer $k$ and positive $\alpha$, let
\[ F_{k,\alpha}(y) := \sup_{B} \frac{ \mu(y + B)}{|B|_k^{\alpha}} \]
where $B$ ranges over all centered ellipsoids.  Then for any $p \in (0,\infty)$,
\[ || F_{k,\frac{\alpha p}{p+1}} ||_{L^\infty(\mu)} \leq 2^{\alpha k} ||F_{k,\alpha}||_{p,\infty}^{\frac{p}{p+1}} \]
(here $||f||_{p,\infty}^p := \sup_{\lambda > 0} \lambda^p \mu (\set{ y \in {\cal H}}{ |f(y)| > \lambda})$ is the usual weak-$L^p$ norm).  In particular, if $F_{k,\alpha}$ is in weak-$L^p(\mu)$, then $\mu$ is $k$-curved with exponent $\frac{\alpha p}{p+1}$.
\end{proposition}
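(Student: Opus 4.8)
The plan is to run a dichotomy-and-optimization argument in the bootstrapping spirit of Theorem \ref{mainst}, using that $F_{k,\alpha}$ is a maximal function over ellipsoids centered at the point of evaluation. Everything rests on one covering observation: since a centered ellipsoid $B$ is convex and symmetric, for any $z \in y+B$ one has $y+B \subseteq z + 2B$ (if $z = y + b_0$ and $w = y + b$ with $b_0, b \in B$, then $w - z = b - b_0 \in 2B$, since $\tfrac12(b-b_0) = \tfrac12 b + \tfrac12(-b_0)$ is a convex combination of points of $B$ by symmetry). As $2B$ is again a centered ellipsoid with $|2B|_k = 2^k |B|_k$, the definition of $F_{k,\alpha}$ gives, for every $z \in y+B$,
\[ \mu(y+B) \leq \mu(z+2B) \leq |2B|_k^\alpha F_{k,\alpha}(z) = 2^{k\alpha} |B|_k^\alpha F_{k,\alpha}(z). \]

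With this pointwise bound in hand I would fix $y$, a centered ellipsoid $B$, and a threshold $\lambda > 0$, and split into two cases. If some $z \in y+B$ has $F_{k,\alpha}(z) \leq \lambda$, the display yields $\mu(y+B) \leq 2^{k\alpha} |B|_k^\alpha \lambda$. Otherwise $F_{k,\alpha} > \lambda$ throughout $y+B$, so $y+B \subseteq \set{z \in {\cal H}}{F_{k,\alpha}(z) > \lambda}$ and the definition of the weak-$L^p$ norm gives $\mu(y+B) \leq \|F_{k,\alpha}\|_{p,\infty}^p \lambda^{-p}$. In either case
\[ \mu(y+B) \leq \max\left\{ 2^{k\alpha} |B|_k^\alpha \lambda,\ \|F_{k,\alpha}\|_{p,\infty}^p \lambda^{-p} \right\}, \]
and crucially this holds for every $y$, not merely for $\mu$-almost every $y$.

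It then remains to optimize in $\lambda$. Balancing the two terms via $\lambda^{p+1} = \|F_{k,\alpha}\|_{p,\infty}^p (2^{k\alpha}|B|_k^\alpha)^{-1}$ gives, with $\beta := \frac{\alpha p}{p+1}$,
\[ \mu(y+B) \leq 2^{k\alpha p/(p+1)} \|F_{k,\alpha}\|_{p,\infty}^{p/(p+1)} |B|_k^{\beta} \leq 2^{k\alpha} \|F_{k,\alpha}\|_{p,\infty}^{p/(p+1)} |B|_k^{\beta}, \]
where $p/(p+1) < 1$ is used to simplify the constant. Dividing by $|B|_k^\beta$ and taking the supremum over all centered $B$ yields $F_{k,\beta}(y) \leq 2^{k\alpha} \|F_{k,\alpha}\|_{p,\infty}^{p/(p+1)}$ for every $y$; taking the $L^\infty(\mu)$ norm in $y$ is the claimed estimate. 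Because this bound is genuinely pointwise, the final sentence follows at once: any ellipsoid with center $x_0$ is $x_0 + B$ for a centered $B$, and $\mu(x_0 + B) \leq F_{k,\beta}(x_0) |B|_k^\beta$, so finiteness of $\|F_{k,\alpha}\|_{p,\infty}$ forces $k$-curvature with exponent $\beta$ for all ellipsoids at every center.

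I do not anticipate a serious obstacle; the single point demanding care is the dichotomy, namely recognizing that the hypothesis $F_{k,\alpha} \in$ weak-$L^p$ enters only through the alternative in which $y+B$ lies entirely inside a super-level set of $F_{k,\alpha}$. The covering inclusion $y+B \subseteq z+2B$ is precisely what links the pointwise maximal estimate to that super-level set, and it is the only place where the convexity and symmetry of centered ellipsoids are used.
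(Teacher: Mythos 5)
Your proposal is correct and follows essentially the same argument as the paper: the same dichotomy (either some point of $y+B$ has $F_{k,\alpha} \leq \lambda$, in which case the doubling inclusion $y+B \subset z+2B$ gives $\mu(y+B) \leq 2^{\alpha k}\lambda |B|_k^\alpha$, or else $y+B$ lies in the superlevel set and the weak-$L^p$ bound applies), followed by the same optimization in $\lambda$. Your write-up is in fact slightly more explicit than the paper's about why the inclusion holds and why the resulting bound is genuinely pointwise in $y$, but the underlying proof is identical.
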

\begin{proof}
If $F_{k,\alpha}(y') \leq \lambda$ at any point $y' \in y + B$, then $\mu(y + B) \leq 2^{\alpha k} \lambda |B|_k^{\alpha}$ for the simple reason that $y' + 2 B \supset y + B$ in this case.  Thus, for any $\lambda > 0$, either $\mu(y + B) \leq 2^{\alpha k} \lambda |B|_k^{\alpha}$ or $y + B \subset \set{y \in {\cal H}} {|F_{k,\alpha}(y)| > \lambda}$, in which case $\mu(y+B) \leq ||F_{k,\alpha}||_{p,\infty}^p \lambda^{-p}$.  Minimizing over $\lambda$ gives the proposition.
\end{proof}

\begin{proposition}
Suppose $\mu$ is $k$-admissible.  If there exists a constant $C < \infty$ and an exponent $\alpha > 0$ such that
\[ \mu \left( \set{ y \in {\cal H}}{\mathop{\mathrm{dist}}(y,{\cal H}_0) \leq \delta} \right) \leq C \delta^{\alpha k} \]
for all $\delta > 0$ and all affine $(k-1)$-dimensional subspaces ${\cal H}_0 \subset {\cal H}$ (and $\mathop{\mathrm{dist}}(y, {\cal H}_0)$ is the distance from $y$ to ${\cal H}_0$), then $\mu(B) \leq C |B|_k^{\alpha}$ for all ellipsoids $B$.  The same inequality will hold for all centered ellipsoids when ${\cal H}_0$ ranges over subspaces passing through the origin.
\end{proposition}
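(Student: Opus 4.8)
The plan is to contain the ellipsoid $B$ inside a thin neighborhood of a suitable affine $(k-1)$-dimensional subspace and then invoke the hypothesis directly. Write
\[ B = \set{x \in {\cal H}}{\sum_i \frac{|\langle x - x_0,\omega_i\rangle|^2}{\ell_i^2} \le 1} \]
and order the semi-axes decreasingly, $\ell_1 \ge \ell_2 \ge \cdots$. If $\ell_k = \infty$ then the top $k$ lengths are all infinite and $|B|_k = \infty$, so there is nothing to prove; hence I may assume $\ell_k < \infty$. The natural candidate subspace is ${\cal H}_0 := x_0 + \mathrm{span}(\omega_1,\ldots,\omega_{k-1})$, the affine $(k-1)$-plane through the center spanned by the directions of the $k-1$ longest axes. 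In the centered case $x_0 = 0$ this plane passes through the origin, which is exactly what is needed for the final assertion about centered ellipsoids.

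First I would establish the pointwise containment $B \subset \set{y \in {\cal H}}{\mathrm{dist}(y,{\cal H}_0) \le \ell_k}$. For $x \in B$ the squared distance to ${\cal H}_0$ is $\sum_{i \ge k}|\langle x - x_0,\omega_i\rangle|^2$, and since $\ell_i \le \ell_k$ for every $i \ge k$ one factors out $\ell_k^2$ to obtain
\[ \sum_{i\ge k}|\langle x-x_0,\omega_i\rangle|^2 \le \ell_k^2\sum_{i\ge k}\frac{|\langle x-x_0,\omega_i\rangle|^2}{\ell_i^2} \le \ell_k^2, \]
the last inequality being the defining constraint of $B$. Applying the hypothesis to the subspace ${\cal H}_0$ with $\delta = \ell_k$ then gives $\mu(B) \le C\ell_k^{\alpha k}$. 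Finally, because $\ell_k$ is the smallest of the $k$ longest semi-axes, $\ell_k^k \le \ell_1\cdots\ell_k \le |B|_k$, so that $\mu(B) \le C\ell_k^{\alpha k} \le C|B|_k^\alpha$, with the same constant $C$ as in the hypothesis.

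The degenerate case $\ell_k = 0$ is easily absorbed: then every direction $\omega_i$ with $i\ge k$ has $\ell_i=0$, forcing $B$ into an affine subspace of dimension at most $k-1$, and both $k$-admissibility and the $\delta\to 0$ limit of the hypothesis give $\mu(B)=0=|B|_k^\alpha$. I expect the core argument to present no real obstacle; the only genuinely delicate bookkeeping is the supremum definition of the $k$-content when ${\cal H}$ is infinite-dimensional, where the $k$-th largest semi-axis need not be attained. This is handled by replacing $\ell_k$ throughout with $\delta^* := \inf\set{\delta>0}{ \text{at most } k-1 \text{ indices satisfy } \ell_i > \delta}$, choosing ${\cal H}_0$ as the span of the (at most $k-1$) directions with $\ell_i > \delta^*+\eta$, running the displayed estimate with $\delta = \delta^*+\eta$, and letting $\eta \to 0$. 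Since at least $k$ semi-axes are $\ge \delta^*$, one still has $(\delta^*)^k \le |B|_k$ by the same elementary comparison, so the conclusion is unchanged.
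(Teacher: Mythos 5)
Your proof is correct and follows essentially the same route as the paper's: take ${\cal H}_0$ to be the affine $(k-1)$-plane through $x_0$ spanned by the $k-1$ longest axis directions, show every point of $B$ lies within distance $\ell_k$ of it via the weighted sum defining $B$, and conclude with $\ell_k^k \leq |B|_k$. The only difference is that you explicitly treat the degenerate cases ($\ell_k = 0$ or $\infty$, and the non-attained supremum in infinite dimensions), which the paper leaves implicit.
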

\begin{proof}
Suppose $B := \set{x \in {\cal H}}{ \sum_i \ell_i^{-2} |\left<x - x_0,\omega_i \right>|^2 \leq 1}$ where the $\omega_i$'s are an orthonormal basis and $\ell_i$ is a nonincreasing function of $i$.  Let ${\cal H}_0$ be the affine subspace $\set{x_0 + \sum_{i=1}^{k-1} a_i \omega_i}{a_1,\ldots,a_{k-1} \in \R}$.  For any $y \in B$,
\[ (\mathop{\mathrm{dist}}(y,{\cal H}_0))^{2} =  \sum_{i=k}^\infty |\left<y - x_0,\omega_i \right>|^2 \leq \ell_{k}^2 \sum_{i=k}^\infty \ell_i^{-2} |\left<y - x_0,\omega_i \right>|^2 \leq \ell_k^2. \]
Thus $ \mu(B) \leq C \ell_k^{\alpha k}$. Since $\ell_k^k \leq |B|_k$, the proposition must hold.
\end{proof}

\bibliography{mybib}

\end{document}